\documentclass[11pt,a4paper]{article}
\usepackage[T1]{fontenc}
\usepackage[utf8]{inputenc}
\usepackage{lmodern}
\usepackage[margin=28mm]{geometry}
\usepackage{amsmath,amssymb,amsthm,mathtools}
\usepackage{microtype}
\usepackage{enumitem}
\usepackage[hidelinks]{hyperref}

\hypersetup{
  pdftitle={Sharp lower bounds for bisections of graphs with bounded maximum degree},
  pdfauthor={Qiwen Guo and Gregory Gutin},
  pdfsubject={A Gallai-Edmonds and Hall-deficiency proof of sharp unweighted bisection bounds},
  pdfkeywords={maximum bisection, maximum degree, Gallai-Edmonds decomposition, Hall's theorem, degree deficiency}
}

\newtheorem{theorem}{Theorem}[section]
\newtheorem{lemma}[theorem]{Lemma}
\newtheorem{proposition}[theorem]{Proposition}
\theoremstyle{definition}

\theoremstyle{remark}

\numberwithin{equation}{section}
\newcommand{\bis}{\operatorname{bis}}
\newcommand{\ceil}[1]{\left\lceil #1\right\rceil}
\newcommand{\floor}[1]{\left\lfloor #1\right\rfloor}
\newcommand{\cD}{\mathcal D}
\newcommand{\cE}{\mathcal E}
\newcommand{\cO}{\mathcal O}
\newcommand{\cX}{\mathcal X}
\setlist[enumerate]{label=(\roman*),leftmargin=2em,itemsep=2pt,topsep=5pt}
\title{Sharp lower bounds for bisections of graphs\\[2pt]with bounded maximum degree}
\author{Qiwen Guo and Gregory Gutin\\ Royal Holloway, University of London, UK}
\date{}

\begin{document}
\maketitle

\begin{abstract}
A bisection of a graph is a partition of its vertex set into two parts
whose sizes differ by at most one. We prove that every graph with $m$
edges and maximum degree at most $k$ has a bisection of size at least
$\frac{k+1}{2k}m$ when $k$ is odd, and at least
$\frac{k+2}{2(k+1)}m$ when $k$ is even. Both bounds are sharp, with
equality for complete graphs. This establishes the unweighted case
of the bisection conjecture of Gerke, Gutin, Yeo and Zhou (arXiv 2024), removes the
additive error from the corresponding bounds of Lee, Loh and Sudakov (JCTB 2013),
and extends the odd-degree regular case proved by Bollob\'as and Scott (JGT 2004)
to arbitrary graphs of bounded maximum degree. The proof combines
the matching-and-pairing method with the Gallai--Edmonds decomposition.
Hall's theorem controls the number of uncovered factor-critical
components whose internal degrees are all even. A boundary-edge count
bounds this obstruction by the total degree deficiency, while the
remaining uncovered components supply the required pairing gain.
\end{abstract}

\medskip
\noindent\textbf{Keywords:} maximum bisection; maximum degree;
Gallai--Edmonds decomposition; Hall's theorem; degree deficiency.

\section{Introduction}\label{sec:introduction}

All graphs considered in this paper are finite, simple and unweighted.
For a graph $G$, write $n=|V(G)|$, $m=|E(G)|$, and let $\Delta(G)$
denote its maximum degree. A \emph{cut} is the set of edges between
the two parts of a partition $(X,Y)$ of $V(G)$. The cut is a
\emph{bisection} if $\bigl||X|-|Y|\bigr|\le1$. Let $\bis(G)$ be the maximum number of edges in a bisection of $G$.

The maximum-cut and maximum-bisection problems ask for partitions
with many crossing edges, but the balance requirement makes the
second problem more restrictive. Pairing vertices, leaving at most
one unpaired vertex, and independently placing the vertices of each
pair on opposite sides gives the basic bound $\bis(G)\ge m/2$.
A natural extremal question is whether the sharp degree-dependent
coefficients for unrestricted cuts also hold for bisections; see
\cite{GGYZ} and the references therein.

Lee, Loh and Sudakov~\cite{LLS} proved that every graph with maximum
degree at most $k$ has a bisection satisfying
\begin{equation}\label{eq:LLS}
\bis(G)\ge
\begin{cases}
\displaystyle\frac{k+1}{2k}\,m-\frac{k(k+1)}4,
   &\text{if $k$ is odd},\\[6pt]
\displaystyle\frac{k+2}{2(k+1)}\,m-\frac{k(k+2)}4,
   &\text{if $k$ is even}.
\end{cases}
\end{equation}
Their proof uses an almost-balanced partition obtained from an
equitable coloring and then restores balance. Bollob\'as and
Scott~\cite{BS} had proved the exact bound
$\bis(G)\ge\frac{k+1}{2k}m$ for $k$-regular graphs when $k$ is odd;
Lee, Loh and Sudakov also give a proof of this regular-graph result.

Gerke, Gutin, Yeo and Zhou~\cite[Conjecture~1.1]{GGYZ} conjectured
that the two coefficients in~\eqref{eq:LLS} hold without the additive
error, even for graphs with arbitrary nonnegative real edge weights.
Here the size of a cut is replaced by its total edge weight.
They proved the weighted assertion when $k$ is even, for graphs
of chromatic-index Class~1, and for subcubic graphs. A graph is of
\emph{Class~1} when its chromatic index equals its maximum degree;
it is \emph{subcubic} when its maximum degree is at most three.
The unweighted case not covered by these results is the exact
odd-$k$ bound without a regularity or Class~1 assumption or $k=3.$

Our main result settles the unweighted case of this conjecture.
\begin{theorem}\label{thm:main}
Let $k$ be a positive integer, let $G$ be a graph with
$\Delta(G)\le k$, and let $m=|E(G)|$. Then
\begin{equation}\label{eq:main}
\bis(G)\ge
\begin{cases}
\displaystyle\frac{k+1}{2k}\,m,&\text{if $k$ is odd},\\[6pt]
\displaystyle\frac{k+2}{2(k+1)}\,m,&\text{if $k$ is even}.
\end{cases}
\end{equation}
For each $k$, the corresponding coefficient is best possible.
\end{theorem}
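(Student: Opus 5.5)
The plan follows the matching-and-pairing method announced in the abstract. Take a maximum matching $M$ of $G$. Pair the vertices into $\lfloor n/2\rfloor$ pairs: each matching edge is a pair, and the $M$-unsaturated vertices are paired arbitrarily among themselves. Then put the two vertices of each pair on opposite sides, choosing the orientation of each pair independently and uniformly at random. This always gives a bisection. Every matching edge is cut with probability $1$, and every other edge is cut with probability $1/2$, since its endpoints lie in different pairs. Hence
\[
\bis(G)\ge \frac{m+|M|}{2}.
\]
It therefore suffices to show $|M|\ge m/k$ for odd $k$, and $|M|\ge m/(k+1)$ for even $k$. These are Vizing-type matching bounds: a proper edge colouring with $k+1$ colours gives $|M|\ge m/(k+1)$, which settles even $k$ once we pass to $k$ even. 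For odd $k$ this gives only $m/(k+1)$, so the real content is the odd case, where $|M|\ge m/k$ can fail, e.g.\ for $K_{k+1}$ with $k$ even. When the direct matching bound is insufficient, I would refine the random pairing by using the unmatched vertices themselves. The Gallai--Edmonds decomposition $(D,A,C)$ says that every maximum matching matches $A$ into distinct factor-critical components of $G[D]$, leaves each remaining component near-perfectly matched with one uncovered vertex, and perfectly matches $C$.

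For the uncovered components, I would choose the pairing more cleverly than arbitrarily. Pair an uncovered vertex with a vertex of another component so as to gain cut edges in expectation. Alternatively, exploit that in a factor-critical component $H$ any vertex can be left uncovered, and pick the uncovered vertex $v$ of low degree. Re-randomising inside $H$, with $v$ combined with another deficient vertex, then gains roughly $\deg(v)/2$ extra edges. The accounting I would aim for is
\[
2\bis(G)-m\ge |M|+\sum(\text{gains from uncovered components}),
\]
to be compared with $m/k$. Here $m=\tfrac12\sum_v\deg(v)=\tfrac12(kn-\text{deficiency})$, where the deficiency is $\sum_v (k-\deg v)$. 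Since $|M|=(n-u)/2$, with $u$ the number of uncovered vertices, the target reduces to showing that each uncovered component, counted against $|M|$'s shortfall of $u/2$, contributes either deficiency or pairing gain of order $1/k$ per vertex.

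The main obstacle is the set of uncovered factor-critical components in which every vertex has even degree inside the component and full degree $k$ with $k$ odd. Such a component must send an odd number of edges to $A$, so each has at least one boundary edge. I would apply Hall's theorem to the bipartite graph between $A$ and these components: since the components are uncovered, there is a Hall-deficient set, so the components outnumber their $A$-neighbours. Counting boundary edges then charges the excess of these components to the degree deficiency of the vertices in $A$, whose degrees are at most $k$. The remaining uncovered components either contain an odd internal degree, and hence a vertex with an edge leaving usefully, or contain a deficient vertex; these supply the pairing gain. Balancing these two charges exactly, with no additive loss, is where I expect the delicate work to lie.

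For sharpness, take $G=K_{k+1}$, so $m=k(k+1)/2$. For odd $k$, $n$ is even and $\bis=((k+1)/2)^2=\frac{k+1}{2k}m$. For even $k$, $\bis=\frac{k}{2}\cdot\frac{k+2}{2}=\frac{k+2}{2(k+1)}m$. Disjoint unions of copies of $K_{k+1}$ show that the coefficient cannot be improved even asymptotically.
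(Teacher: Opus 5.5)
\textbf{There is a genuine gap in the odd case.} Your even-$k$ argument is correct: random pairing gives $\bis(G)\ge (m+\nu(G))/2$, and Vizing gives $\nu(G)\ge m/(k+1)$, which is a valid alternative to the paper's reduction to $k+1$. Your sharpness discussion is also correct. Your skeleton for odd $k$ has the same ingredients as the paper: Gallai--Edmonds, Hall's theorem on $A$ versus even-degree components, boundary counting, and pairing gains from odd internal degrees. But odd $k$ is the only new case, and there the proposal stops short of a proof. The gain mechanism you describe also cannot work.

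\textbf{The gain mechanism.} Fix a pairing and orient the pairs independently and uniformly. Every edge not inside a pair is then cut with probability exactly $1/2$. The exposed vertices form an independent set, so no re-pairing or ``re-randomising'' of them beats $(m+\nu(G))/2$ in expectation. There is no gain in expectation, and certainly none of order $\deg(v)/2$.

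The missing idea is an integrality and parity gain. Order the pairs and place each one greedily against the vertices already placed. A pair with no internal edge and an odd number $b$ of edges to earlier vertices then gets $\ceil{b/2}=(b+1)/2$ crossing edges. Now let $w$ be the exposed vertex of a factor-critical component $T$ with $d_T(w)$ odd. Some edge $xy$ of the perfect matching of $T-w$ has exactly one end adjacent to $w$. Any other exposed vertex $w'$ lies in another component of $G[D]$ and is adjacent to neither $x$ nor $y$. Hence $\{w,w'\}$ sends an odd number of edges to some matching edge, and can be inserted right after the first such edge. Pairing these vertices, using the parity of $n$ to absorb one leftover, gives $\bis(G)\ge m/2+(n-t)/4$ for even $n$. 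Here $t$ is the number of uncovered components all of whose internal degrees are even.

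\textbf{The deficiency accounting.} Your accounting targets the wrong case and lacks the key estimate.

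Components with all internal degrees even and every vertex of degree $k$ are never an obstacle. Each sends $k|V(T)|-2e(T)\ge k$ edges to $A$, not merely one. Each vertex of $A$ absorbs at most $k$ of them, so Hall's condition holds and all such components can be covered.

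The real losses come from even components that carry deficiency. These give no parity gain, and they cannot be charged one at a time, nor to $A$. For example, take $K_{1,3}$ with $k=3$. Two leaves must stay exposed, each leaf has deficiency $2<k$, and the centre has deficiency $0$. The bound therefore needs the deficiency of the covered leaf as well.

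What is needed is the following:
\begin{enumerate}
\item A maximum matching leaving uncovered exactly the minimum number $t$ of even components. You get this from Hall deficiency with prescribed coverage while keeping $A$ saturated. Your claim ``since the components are uncovered, there is a Hall-deficient set'' is false for an arbitrary maximum matching.
\item The estimate $k|V(T)|-2e(T)\ge k$ for every even component $T$. This uses that $k$ is odd, so $d_T(v)\le\min\{k-1,|V(T)|-1\}$.
\item The non-local count $\Phi_k(G)\ge\sum_{T\in\cX}(k|V(T)|-2e(T))-e_G(U,Z)\ge k|\cX|-k|Z|=kt$. Here $\cX$ is a Hall-maximizing family with union $U$, and $Z=N_B(\cX)$.
\end{enumerate}

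The correct exchange rate is $k$ units of deficiency per exposed vertex left without a gain pair, not ``order $1/k$ per vertex''. With it, $\bis(G)\ge m/2+(n-\Phi_k(G)/k)/4=\frac{k+1}{2k}m$.
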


Equality is attained by $K_{k+1}$. The even case follows from the
odd case applied with $k+1$, so the new assertion is the remaining
odd-$k$ case. Isolated vertices and disconnected graphs are allowed.

\subsection*{Proof strategy}

We use the matching-and-pairing method of Lee, Loh and
Sudakov~\cite[Section~3.2]{LLS}, in a form stated and proved in
Section~\ref{sec:pairing}. Separating the end-vertices of each matching
edge gives a gain of $1/2$ over $m/2$. Certain
pairs of exposed vertices give the same gain, using the parity of
their adjacencies to matching edges.

The Gallai--Edmonds decomposition allows us to choose the exposed
vertices component by component. In any uncovered component having
a vertex of odd internal degree, we expose such a vertex; it can be
paired advantageously with any other exposed vertex. We give priority
to covering the components whose internal degrees are all even.
Hall's theorem shows that the minimum number $t$ of these components
left uncovered is a Hall deficiency. A direct count of their internal
and boundary edges gives
\[
kt\le\Phi_k(G):=kn-2m
\qquad\text{when $k$ is odd and $\Delta(G)\le k$}.
\]
For even $n$, the pairing argument gives
$\bis(G)\ge m/2+(n-t)/4$, which implies the required bound.

Section~\ref{sec:preliminaries} gives the matching definitions and
standard theorems used below. Section~\ref{sec:pairing} proves the
pairing inequality, and Section~\ref{sec:deficiency} establishes the
Hall-deficiency estimate. Section~\ref{sec:structural} gives the
structural bisection bound. Section~\ref{sec:mainproof} proves
Theorem~\ref{thm:main}, and Section~\ref{sec:conclusion} discusses the
scope of the argument.

\section{Matching preliminaries}\label{sec:preliminaries}

For $v\in V(G)$, let $d_G(v)$ and $N_G(v)$ denote its degree and
neighborhood. For disjoint vertex sets $U,Z$, let $E_G(U,Z)$ be the set
of edges with one end-vertex in each set, and put
$e_G(U,Z)=|E_G(U,Z)|$. For a subgraph $T$, put $e(T)=|E(T)|$ and let
$d_T(v)$ denote degree of $v$ in $T$.

A \emph{matching} is a set of pairwise vertex-disjoint edges. It
\emph{covers} or \emph{saturates} a vertex set if every vertex in the
set is incident with a matching edge. A \emph{maximum matching} has
largest cardinality, denoted by $\nu(G)$, and a \emph{perfect matching}
covers $V(G)$. A vertex not covered by a specified matching is
\emph{exposed}, or \emph{uncovered}. A \emph{near-perfect matching}
leaves exactly one vertex exposed. The empty matching is perfect in the graph with no vertices. 
A nonempty graph $T$ is
\emph{factor-critical} if $T-v$ has a perfect matching for each
$v\in V(T)$. Thus a factor-critical graph has odd order.

When $\Delta(G)\le k$, define the \emph{total degree deficiency} by
\begin{equation}\label{eq:deficiency}
\Phi_k(G):=\sum_{v\in V(G)}\bigl(k-d_G(v)\bigr)=kn-2m.
\end{equation}
Every summand is nonnegative.

\subsection{Hall's theorem and prescribed coverage}

For a bipartite graph $B$ with parts $L,Q$ and a set $S$ contained
in either part, let $N_B(S)$ denote its neighborhood in the other
part. We use Hall's theorem in its usual matching form.

\begin{theorem}[Hall~\cite{Hall}]\label{thm:Hall}
A bipartite graph $B$ with parts $L,Q$ has a matching saturating $Q$
if and only if $|N_B(S)|\ge |S|$ for every $S\subseteq Q$.
\end{theorem}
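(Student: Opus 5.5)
The plan is to prove Hall's theorem by the classical induction on $|Q|$ (the Halmos--Vaughan argument), after disposing of necessity directly. For necessity, suppose $M$ is a matching saturating $Q$ and let $S\subseteq Q$. The map sending each $q\in S$ to its $M$-partner is injective, since $M$ consists of vertex-disjoint edges. Its image lies in $N_B(S)$, so $|N_B(S)|\ge|S|$. Nothing more is needed for this direction.

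For sufficiency, I will induct on $|Q|$. The case $|Q|=0$ holds because the empty matching saturates $Q$. The case $|Q|=1$ is immediate, since the single vertex has a neighbour by Hall's condition. For the inductive step, assume Hall's condition holds for $Q$ with $|Q|\ge2$, and split into two cases according to whether the condition holds with slack.

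In the first case, $|N_B(S)|\ge|S|+1$ for every nonempty proper subset $S\subsetneq Q$. Pick any $q\in Q$ and any neighbour $\ell\in L$ of $q$; one exists because $|N_B(\{q\})|\ge1$. Delete both $q$ and $\ell$. In $B-\{q,\ell\}$, every nonempty $S\subseteq Q\setminus\{q\}$ loses at most the one neighbour $\ell$, so it still has at least $|S|$ neighbours. By induction, $Q\setminus\{q\}$ can be saturated in $B-\{q,\ell\}$, and adding the edge $q\ell$ finishes this case.

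In the second case, some nonempty proper $S_0\subsetneq Q$ has $|N_B(S_0)|=|S_0|$. Consider the subgraph $B_1$ induced by $S_0\cup N_B(S_0)$. Hall's condition holds there for subsets of $S_0$, since their neighbourhoods lie entirely in $N_B(S_0)$. Induction therefore gives a matching $M_1$ saturating $S_0$.

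Next, let $B_2$ be the subgraph induced by $(Q\setminus S_0)\cup(L\setminus N_B(S_0))$. For $T\subseteq Q\setminus S_0$, apply Hall's condition in $B$ to $T\cup S_0$:
\[
|N_{B_2}(T)|=|N_B(T\cup S_0)|-|N_B(S_0)|\ge |T|+|S_0|-|S_0|=|T|.
\]
Induction then gives a matching $M_2$ saturating $Q\setminus S_0$ in $B_2$. Because $B_1$ and $B_2$ are vertex-disjoint, $M_1\cup M_2$ is a matching in $B$ saturating $Q$.

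The main obstacle is this second case, specifically checking Hall's condition in $B_2$. The key identity is that $N_{B_2}(T)$ equals $N_B(T\cup S_0)\setminus N_B(S_0)$, which holds because every neighbour of $S_0$ has been removed from $B_2$. This identity is exactly what converts tightness at $S_0$ into a valid condition on the complementary subproblem. Both $S_0$ and $Q\setminus S_0$ are strictly smaller than $Q$, so the induction applies to each.
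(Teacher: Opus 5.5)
Your proof is correct. The paper does not prove this statement: it quotes Hall's theorem from the literature and uses it only as a black box in the proof of Lemma~\ref{lem:preferred}. So your argument supplies something the paper omits, rather than reproducing or departing from a proof in it.

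What you give is the standard Halmos--Vaughan induction, and the steps check out. In the slack case, every nonempty $S\subseteq Q\setminus\{q\}$ is a nonempty proper subset of $Q$, so removing $\ell$ still leaves at least $|S|$ neighbours. In the tight case, $N_B(S_0)\subseteq N_B(T\cup S_0)$ and $T\cap S_0=\varnothing$ give exactly your displayed inequality.

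Two small points of presentation. First, you are using strong induction on $|Q|$, since the hypothesis is applied to $S_0$ and $Q\setminus S_0$, whose sizes can be anything below $|Q|$; say so explicitly. Second, the base case $|Q|=1$ is redundant: there are then no nonempty proper subsets, so your first case applies vacuously and reduces to $|Q|=0$.

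The other classical route is the alternating-path argument, which matches the exchange step the authors use inside Lemma~\ref{lem:preferred}. Suppose a maximum matching leaves some $q\in Q$ exposed. Then the set $S$ of vertices of $Q$ reachable from $q$ by alternating paths satisfies $|N_B(S)|=|S|-1$, which violates Hall's condition. That version is constructive and yields the deficiency form of Hall's theorem almost immediately. Your version is a pure counting induction that needs nothing beyond the definition of a matching.
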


The following consequence includes the coverage requirement needed
in the proof.
\begin{lemma}[Hall deficiency with prescribed coverage]\label{lem:preferred}
Let $B$ be a bipartite graph with parts $L,Q$, and suppose that $B$
has a matching saturating $L$. For $Q_0\subseteq Q$, put
\begin{equation}\label{eq:preferreddef}
\delta:=\max_{S\subseteq Q_0}\bigl(|S|-|N_B(S)|\bigr).
\end{equation}
Then $B$ has a matching saturating $L$ and leaving exactly $\delta$
vertices of $Q_0$ uncovered. Every matching of $B$ leaves at least
$\delta$ vertices of $Q_0$ uncovered.
\end{lemma}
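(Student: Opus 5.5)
The lower bound is the easy half. Let $M$ be any matching of $B$ and $S\subseteq Q_0$. The vertices of $S$ covered by $M$ are matched injectively into $N_B(S)$, so at most $|N_B(S)|$ of them are covered. Hence at least $|S|-|N_B(S)|$ vertices of $S\subseteq Q_0$ are uncovered. Maximizing over $S$, and noting that $S=\emptyset$ gives $\delta\ge0$, shows that every matching leaves at least $\delta$ vertices of $Q_0$ uncovered.

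For the existence part I will proceed in two steps. First I find a matching $M_1$ covering all but $\delta$ vertices of $Q_0$. This is the defect form of Hall's theorem. Add $\delta$ new vertices to $L$, each adjacent to every vertex of $Q_0$, and restrict attention to the bipartite graph between $L$ together with the new vertices on one side and $Q_0$ on the other. For $S\subseteq Q_0$ nonempty, the new neighborhood has size $|N_B(S)|+\delta\ge|S|$, so Theorem~\ref{thm:Hall} gives a matching saturating $Q_0$. Deleting the at most $\delta$ edges at new vertices yields a matching $M_1$ of $B$ covering at least $|Q_0|-\delta$ vertices of $Q_0$. By the first paragraph, it covers exactly $|Q_0|-\delta$ of them.

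Second, I combine $M_1$ with a matching $M_2$ saturating $L$, which exists by hypothesis. This is the main step, a standard Mendelsohn--Dulmage style exchange. Consider the symmetric difference $M_1\triangle M_2$, whose components are alternating paths and even cycles. Among all matchings covering exactly $|Q_0|-\delta$ vertices of $Q_0$, choose $M$ to maximize the number of covered vertices of $L$. Suppose some $x\in L$ is exposed by $M$. Then $x$ is covered by $M_2$, and its component in $M\triangle M_2$ is a path $P$ starting at $x$ with an $M_2$-edge.

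I will check that switching $M$ along $P$ keeps every vertex of $Q_0$ that was covered, or else contradicts the lower bound. Internal vertices of $P$ stay covered, and $x$ becomes covered. The only vertex that can lose coverage is the other endpoint $y$ of $P$, and only when $P$ has even length, ending with an $M$-edge.
\begin{enumerate}
\item If $P$ has odd length, then $y\in Q$ is newly covered. The switch covers one more vertex of $L$ and loses no vertex of $Q_0$. If $y\in Q_0$, it covers $|Q_0|-\delta+1$ vertices of $Q_0$, which is impossible.
\item If $P$ has even length, then $y\in L$, since the path starts at $x\in L$ with an $M_2$-edge and alternates sides. So $y$ loses $M$-coverage but $x$ gains it. The number of covered vertices of $L$ is unchanged, and the $Q$-coverage is unchanged.
\end{enumerate}
Case (ii) needs care: $y\in L$ is exposed by $M_2$, but $M_2$ saturates $L$. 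So case (ii) cannot occur, and in case (i) the switch contradicts the maximality of $M$.

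Hence $M$ saturates $L$ while covering exactly $|Q_0|-\delta$ vertices of $Q_0$, which proves the lemma. The main care is in the parity and endpoint analysis of the path $P$, and this is where the hypothesis that $M_2$ saturates $L$ enters.
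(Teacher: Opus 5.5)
Your proof is correct and follows essentially the same route as the paper: the lower bound by injecting the covered vertices of $S$ into $N_B(S)$, the defect form of Hall's theorem via $\delta$ auxiliary vertices joined to all of $Q_0$, and an alternating-path exchange against a matching saturating $L$. The only difference is cosmetic. You take an extremal matching and follow the path from a specific exposed vertex of $L$, ruling out even length because $M_2$ saturates $L$. The paper instead finds an augmenting path from the count $|K|>|J|$ and iterates.
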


\begin{proof}
Any matching covers at most $|N_B(S)|$ vertices of $S$, so it leaves
at least $|S|-|N_B(S)|$ vertices of $Q_0$ uncovered. This proves the
lower bound. Notice that $\delta\ge0$, since $S=\varnothing$ is
permitted in~\eqref{eq:preferreddef}.

To attain the bound without yet requiring saturation of $L$, take
$B[L\cup Q_0]$ and add $\delta$ new vertices to the part $L$, each
adjacent to every vertex of $Q_0$. For every nonempty
$S\subseteq Q_0$, its new neighborhood has size
$|N_B(S)|+\delta\ge |S|$. Hall's theorem gives a matching saturating
$Q_0$. Deleting the edges incident with the new vertices gives a
matching $J$ in $B[L\cup Q_0]$ leaving at most $\delta$ vertices
of $Q_0$ uncovered, and hence exactly $\delta$ by the lower bound.

Regard $J$ as a matching of $B$. If it does not saturate $L$, compare
it with a matching $K$ that does. The symmetric difference
$J\triangle K=(J\setminus K)\cup(K\setminus J)$ consists of paths
and even cycles whose edges alternate between the two matchings.
Since $|K|>|J|$, some path has one more edge of $K$ than of $J$;
its end-vertices are exposed by $J$. Exchanging the edges along this
path increases $|J|$ by one and preserves every vertex already
covered by $J$. Repeat until $L$ is saturated. The final matching
still covers all the previously covered vertices of $Q_0$, so it leaves at most $\delta$ vertices of $Q_0$ uncovered. By the lower bound, it leaves at least $\delta$ vertices of $Q_0$ uncovered. It therefore has the required properties.
\end{proof}

\subsection{The Gallai--Edmonds decomposition}

Define
\begin{equation}\label{eq:GEsets}
\begin{aligned}
D&:=\{v\in V(G):v\text{ is exposed by some maximum matching of }G\},\\
A&:=\left(\bigcup_{v\in D}N_G(v)\right)\setminus D,\\
R&:=V(G)\setminus(D\cup A).
\end{aligned}
\end{equation}
We call $(D,A,R)$ the \emph{Gallai--Edmonds decomposition} of $G$.
Let $\cD$ be the family of connected components of $G[D]$, viewed
as induced subgraphs. By definition, there are no edges between
$D$ and $R$. We use the following standard form of the structure
theorem; see, for example, West~\cite{West}.

\begin{theorem}[Gallai--Edmonds]\label{thm:GE}
For the sets in~\eqref{eq:GEsets}, the following hold.
\begin{enumerate}
\item Every $T\in\cD$ is factor-critical.
\item The graph $G[R]$ has a perfect matching.
\item Every maximum matching consists of a perfect matching of
$G[R]$, a near-perfect matching within each $T\in\cD$, and edges
joining all vertices of $A$ to the internally exposed vertices of
$|A|$ distinct components in $\cD$.
\end{enumerate}
In particular,
\begin{equation}\label{eq:GEcardinality}
\nu(G)=\frac{n-|\cD|+|A|}{2}.
\end{equation}
\end{theorem}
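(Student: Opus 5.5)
Since Theorem~\ref{thm:GE} is classical, I would reprove it along the standard route through two lemmas, followed by an induction on $|A|$. The \emph{first lemma} is Gallai's: if $H$ is connected and every vertex of $H$ is exposed by some maximum matching, then $H$ is factor-critical. I would argue by contradiction. Suppose some maximum matching $M$ misses two vertices $u,v$, and choose the triple $(M,u,v)$ with $\operatorname{dist}_H(u,v)$ minimal. The distance is at least $2$, since otherwise $M+uv$ is a larger matching. Let $w$ be an interior vertex of a shortest $u$--$v$ path. By minimality, every maximum matching that misses $w$ covers both $u$ and $v$. Take such a matching $N$ maximizing $|M\cap N|$, and examine the alternating path of $M\triangle N$ starting at $w$. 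Swapping along it produces a maximum matching that misses $w$ together with one of $u,v$, which is closer to $w$ and contradicts minimality. This exchange argument is the main obstacle, so I would carry it out carefully with the symmetric-difference facts already used in the proof of Lemma~\ref{lem:preferred}.

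The \emph{second lemma} is the stability lemma: for $a\in A$ we have
\[
\nu(G-a)=\nu(G)-1,\qquad D(G-a)=D,\qquad A(G-a)=A\setminus\{a\},\qquad R(G-a)=R.
\]
\begin{itemize}
\item Since $a\notin D$, every maximum matching covers $a$, which gives $\nu(G-a)=\nu(G)-1$.
\item Let $x\in D$, and let $M$ be a maximum matching missing $x$. If $M$ matches $a$ to $y$, then $M-ay$ is a maximum matching of $G-a$ missing $x$. Hence $D\subseteq D(G-a)$.
\item Conversely, suppose a maximum matching $M'$ of $G-a$ misses some $x\notin D$. Take a neighbor $v\in D$ of $a$ and a maximum matching $M_v$ of $G$ missing $v$. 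The symmetric difference $M'\triangle M_v$ yields a maximum matching of $G$ missing $x$, a contradiction.
\end{itemize}
The equalities for $A$ and $R$ then follow from the definitions, because deleting $a$ changes no adjacency between $D$ and the other vertices.

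Applying the stability lemma repeatedly deletes all of $A$. This gives $G'=G-A$ with $D(G')=D$, $A(G')=\varnothing$ and $\nu(G')=\nu(G)-|A|$. Since $D$ and $R$ are not joined by edges, a maximum matching of $G'$ is a union of maximum matchings of the components of $G'$. Consider first a component $T\in\cD$. Each of its vertices is missed by some maximum matching of $G'$, hence by some maximum matching of $T$. Gallai's lemma then shows that $T$ is factor-critical, which proves (i). Next consider a component of $G[R]$. No vertex of it is ever exposed, so by the same restriction argument it has a perfect matching, which proves (ii). Counting gives
\[
\nu(G')=\frac{(n-|A|)-|\cD|}{2},
\qquad\text{hence}\qquad
\nu(G)=\nu(G')+|A|=\frac{n-|\cD|+|A|}{2},
\]
which is~\eqref{eq:GEcardinality}.

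For (iii) I would use the elementary Tutte--Berge upper bound with the deleted set $A$. Let $M$ be any matching of $G$. Each odd component $T\in\cD$ of $G-A$ that contains no vertex matched into $A$ has at least one vertex exposed by $M$. Moreover, at most $|A|$ components can contain a vertex matched into $A$. Therefore $M$ leaves at least $|\cD|-|A|$ vertices exposed, and so $|M|\le (n-|\cD|+|A|)/2$. By~\eqref{eq:GEcardinality}, a maximum matching attains this bound. Equality forces the following.
\begin{itemize}
\item Every vertex of $A$ is matched into $D$, and the $|A|$ partners lie in distinct components of $\cD$. Vertices of $A$ cannot be matched into $R$ or to each other, because that would lower the count of components reached from $A$.
\item Each component $T\in\cD$ has exactly one vertex not matched inside $T$.
\item The set $R$ is perfectly matched inside $G[R]$.
\end{itemize}
These are exactly the assertions of (iii).
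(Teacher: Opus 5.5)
Your proposal is correct. It is the classical proof: Gallai's lemma via the minimal-distance exchange, the stability lemma for $a\in A$, deletion of all of $A$, and the Tutte--Berge count with the set $A$. The paper gives no proof of Theorem~\ref{thm:GE} and only cites it as standard (West), so your argument supplies the proof it relies on.

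The only step you assert without argument is $D(G-a)\subseteq D$, and it fills in as follows:
\begin{enumerate}
\item Since $M_v$ is maximum in $G$ and $|M_v|=|M'|+1$, the graph $M'\triangle M_v$ has no $M_v$-augmenting path and exactly one $M'$-augmenting path $P$.
\item The alternating paths starting at $a$ and at $x$ must both equal $P$. If either had even length, swapping $M_v$ along it would give a maximum matching of $G$ missing $a$ or $x$, contradicting $a,x\notin D$.
\item Since $v\notin V(P)$, the matching $(M_v\triangle P)\cup\{av\}$ is a maximum matching of $G$ missing $x$, the required contradiction.
\end{enumerate}
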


Define the \emph{component-incidence graph} $B$ to be the bipartite
graph with parts $A$ and $\cD$, where
\begin{equation}\label{eq:incidence}
aT\in E(B)\quad\Longleftrightarrow\quad
E_G(\{a\},V(T))\ne\varnothing,\quad \text{for}\quad a\in A \quad \text{and} \quad T\in \cD.
\end{equation}
Thus a vertex on the $\cD$ side represents an entire component of
$G[D]$. Theorem~\ref{thm:GE}(iii) implies that $B$ has a matching
saturating $A$.

Conversely, any matching $J$ of $B$ saturating $A$ can be lifted to
a maximum matching of $G$. For each $aT\in J$, choose an actual edge
$av$ with $v\in V(T)$ and a perfect matching of $T-v$. In each
component not covered by $J$, expose any chosen vertex $w$ and take
a perfect matching of $T-w$. Add a perfect matching of $G[R]$.
These choices are possible by Theorem~\ref{thm:GE}(i)--(ii).
The resulting matching has
\[
\frac{|R|}{2}+\sum_{T\in\cD}\frac{|V(T)|-1}{2}+|A|
=\frac{n-|\cD|+|A|}{2}
\]
edges, so it is maximum by~\eqref{eq:GEcardinality}. In particular,
the exposed vertex in each component not covered by $J$ may be
chosen independently.

\section{A pairing inequality}\label{sec:pairing}

We use the pairing argument from~\cite[Section~3.2]{LLS}, expressed
directly through the parity of adjacency counts. 

\begin{lemma}[Pairing inequality]\label{lem:pairing}
Suppose that $G$ has even order, and let $M$ be a maximum matching
of size $s$, with exposed vertex set $W$. Let $P_1,\ldots,P_q$
be pairwise disjoint two-element subsets of $W$. Suppose that for
every $i$ there is an edge $xy\in M$ such that
\begin{equation}\label{eq:oddadjacency}
e_G(P_i,\{x,y\})\quad\text{is odd}.
\end{equation}
Then
\begin{equation}\label{eq:pairbound}
\bis(G)\ge\frac m2+\frac{s+q}{2}.
\end{equation}
\end{lemma}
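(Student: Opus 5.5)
We will prove the bound with a random bisection built from the matching and the pairs. Since $n$ is even and $M$ is a maximum matching, $|W|=n-2s$ is even. We partition $V(G)$ into $n/2$ two-element \emph{units}: the $s$ edges of $M$, the $q$ pairs $P_1,\ldots,P_q$, and an arbitrary pairing of the remaining vertices of $W$. For each unit, one vertex goes to $X$ and the other to $Y$, so $(X,Y)$ is automatically a bisection with $|X|=|Y|$. The orientations are not chosen fully independently. Instead, each $P_i$ is \emph{coupled} to its witnessing edge $x_iy_i\in M$ from~\eqref{eq:oddadjacency}. All other units are oriented independently and uniformly.

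The first step is to count the gain from matching edges. Each matching edge is always cut, and each unit pair inside $W$ is non-adjacent or, if adjacent, is always cut (this is harmless). Every other edge joins two different units. If those two units are oriented independently, the edge is cut with probability $1/2$. Ignoring the coupling for the moment, the expected cut is therefore
\[
\tfrac12\bigl(m-s\bigr)+s=\tfrac m2+\tfrac s2.
\]
The extra $\tfrac s2$ comes from the matching edges.

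The second step obtains the additional $\tfrac12$ per pair $P_i=\{u,w\}$. Look at the four edges between $\{u,w\}$ and $\{x,y\}$. Fixing the orientation of $\{x,y\}$, the two orientations of $P_i$ cut complementary sets among these edges. If $e_G(P_i,\{x,y\})$ is odd, it equals $1$ or $3$, so the two options cut different numbers of these edges, differing by at least $1$. Averaging gives $e/2$, and choosing the better option gains at least $1/2$ over that average. So we orient $P_i$ deterministically as a function of the orientation of $x_iy_i$, choosing whichever option cuts more of these edges.

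The main obstacle is that several pairs may be coupled to the same matching edge, and coupling could disturb the probabilities of other edges. The fix is to check, edge by edge, that every edge not inside a coupled (pair, edge) block still joins two units whose relative orientation is uniform. Each $P_i$'s orientation is a function of a single matching edge's orientation, which is uniform. So if $e$ joins $P_i$ to any unit other than its own partner edge, the relative orientation is still a fair coin; this includes two pairs coupled to the same edge, since XOR with a uniform bit is uniform.

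There is one subtle case, and it is the step to verify carefully: an edge between $P_i$ and $P_j$ when both are coupled to the same matching edge. Then $P_i$ and $P_j$ are deterministic functions of the same bit, so their relative orientation is fixed. It is fixed at either always cut or never cut, which threatens the $1/2$ bound. We handle this by a standard alternative. Process the pairs sequentially, fixing a random orientation $\sigma$ of all units except the $P_i$'s. Then choose the orientations of $P_1,\ldots,P_q$ one at a time, each time optimizing conditioned on the edges to already-placed units. Equivalently, apply the derandomized argument: give the $P_i$ independent orientations, compute the expectation $\tfrac m2+\tfrac s2$, and note that the conditional expectation of the edges at $P_i$ given everything except $P_i$ is an average over its two orientations. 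The edges to $\{x_i,y_i\}$ contribute an odd count, so the better orientation beats the average by at least $\tfrac12$. Performing these improvements one pair at a time, with the others fixed, each gains at least $\tfrac12$ and never decreases the total. This yields a bisection of size at least $\tfrac m2+\tfrac{s+q}{2}$, proving~\eqref{eq:pairbound}.
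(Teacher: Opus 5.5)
There is a genuine gap in the argument you finally rely on. Once every unit other than $P_i=\{u,w\}$ is fixed, the two orientations of $P_i$ cut complementary subsets of \emph{all} edges from $P_i$ to fixed vertices. So the better orientation beats the average by $1/2$ only if the total number of those edges is odd, and the odd count to $\{x_i,y_i\}$ alone does not guarantee this. For example, let $u$ be adjacent to $x_1$ and $x_2$, where $x_1y_1,x_2y_2\in M$, and let $w$ have no neighbors. With $x_1\in X$ and $x_2\in Y$, both orientations of $P_i$ cut exactly one edge, so ``improving'' $P_i$ gains nothing. If the matching edges stay random, the expected gain is in fact still at least $1/2$: condition on all matching bits except that of $x_iy_i$ and use $|T+a|+|T-a|\ge 2|a|$ for odd $a$. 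But you do not give this argument, and the version ``with the others fixed'' is false.

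The paper obtains an odd count by ordering. $P_i$ is inserted immediately after the \emph{first} matching edge $e_{j(i)}$ with $e_G(P_i,V(e_{j(i)}))$ odd. The number of edges from $P_i$ to earlier vertices is then a sum of even numbers plus one odd number. Edges from $P_i$ to later matching edges are counted at those later steps, where only $\lceil b/2\rceil$ is needed. A greedy placement then gains $1/2$ at every matching edge and at every $P_i$.

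The idea you are missing is that $W$ is an independent set, because an edge inside $W$ could be added to the maximum matching $M$. You never actually use maximality of $M$, since $|W|=n-2s$ holds for any matching. The paper's parity count relies on this independence. It also shows that your ``subtle case'' cannot occur: there are no edges between distinct $P_i,P_j$, and none between $P_i$ and other exposed vertices.

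With that observation, your original coupling argument is already complete. Each $P_i$'s orientation depends only on the bit of $x_iy_i$, so every edge from $P_i$ to a matching edge other than $x_iy_i$ is cut with probability $1/2$. The edges between $P_i$ and $\{x_i,y_i\}$ contribute at least $(e_G(P_i,\{x_i,y_i\})+1)/2$ deterministically, and the expected cut is at least $m/2+(s+q)/2$. That would be a valid probabilistic alternative to the paper's deterministic greedy ordering.
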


\begin{proof}
The set $W$ is independent, since an edge joining two exposed
vertices could be added to $M$. Order the matching edges as
$e_1,\ldots,e_s$ and regard each of them as a pair of vertices.
For each specified pair $P_i$, let $j(i)$ be the first index such
that $e_G(P_i,V(e_{j(i)}))$ is odd. Insert $P_i$ immediately after
$e_{j(i)}$, in any order if several pairs have the same index.
All earlier matching pairs have an even number of edges to $P_i$,
and the matching pair at index $j(i)$ has an odd number. Since
there are no edges within $W$, the total number of edges from $P_i$
to the preceding vertices is odd. Pair the remaining vertices of
$W$ arbitrarily and append these pairs; their number is even because
$n-2s-2q$ is even.

Process the resulting ordered partition of $V(G)$ into pairs.
At each step place one vertex of the current pair in each part,
choosing the placement with more edges crossing to earlier vertices.
If $b$ edges join the current pair to earlier vertices, the two
placements have crossing counts summing to $b$, so the better
placement contributes at least $\ceil{b/2}$ of these edges.
Any edge within the pair crosses in either placement.

For a matching pair, the newly counted edges number $b+1$, and
at least $b/2+1=(b+1)/2+1/2$ of them cross. For a specified pair
$P_i$, there is no internal edge and $b$ is odd, so at least
$(b+1)/2$ of its $b$ newly counted edges cross. Every other pair
contributes at least half its newly counted edges. Each edge of
$G$ is counted exactly once. Summing the gains of $1/2$ from the
$s$ matching pairs and $q$ specified pairs proves~\eqref{eq:pairbound}.
Both parts contain one vertex from every pair, so they have equal size.
\end{proof}

Lemma \ref{lem:pairing} can also be proved using the generic low bound of $\bis(G)$ in Lemma 2.2 in \cite{GGYZ}.

\section{Hall deficiency and degree deficiency}\label{sec:deficiency}

Use the Gallai--Edmonds decomposition and component-incidence graph
$B$ from Section~\ref{sec:preliminaries}. Partition $\cD$ into
\begin{equation}\label{eq:componentclasses}
\cE:=\{T\in\cD:d_T(v)\text{ is even for every }v\in V(T)\},
\qquad
\cO:=\cD\setminus\cE.
\end{equation}
The distinction concerns \emph{internal degrees}, not component
orders: every component in $\cD$ has odd order. Single-vertex
components belong to $\cE$; each component in $\cO$ has a vertex
of odd internal degree.

Define
\begin{equation}\label{eq:tdef}
t=t(G):=\max_{\cX\subseteq\cE}
\bigl(|\cX|-|N_B(\cX)|\bigr).
\end{equation}
We call $t$ the \emph{Hall deficiency of the even-degree components}.
It is a nonnegative integer, with $t=0$ if $\cE=\varnothing$.
Lemma~\ref{lem:preferred}, applied with $L=A$, $Q=\cD$ and
$Q_0=\cE$, shows that $B$ has a matching saturating $A$ and leaving
exactly $t$ components of $\cE$ uncovered.

\begin{lemma}[Degree-deficiency bound]\label{lem:deficiency}
Let $k$ be an odd positive integer and let $\Delta(G)\le k$.
Then, for $t$ defined in~\eqref{eq:tdef},
\begin{equation}\label{eq:keydef}
\Phi_k(G)\ge kt.
\end{equation}
\end{lemma}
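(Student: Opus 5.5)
Choose $\cX\subseteq\cE$ attaining the maximum in~\eqref{eq:tdef}, so $t=|\cX|-|N_B(\cX)|$, and put $Y:=N_B(\cX)\subseteq A$. The plan is to bound $\Phi_k(G)$ from below by the deficiency contributed by the vertices inside the components of $\cX$ alone. Summands of $\Phi_k(G)$ at vertices outside $\bigcup_{T\in\cX}V(T)$ are nonnegative and can be discarded. All edges leaving a component $T\in\cX$ go to $Y$: there are no edges between $D$ and $R$, no edges between distinct components of $G[D]$, and any neighbor of $T$ in $A$ lies in $N_B(\cX)=Y$ by~\eqref{eq:incidence}.

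The key local count is as follows. Fix $T\in\cX$ and write $b_T:=e_G(V(T),A)=e_G(V(T),Y)$. Then
\[
\sum_{v\in V(T)}\bigl(k-d_G(v)\bigr)=k|V(T)|-2e(T)-b_T .
\]
Since every internal degree $d_T(v)$ is even and $k$ is odd, each $k-d_T(v)$ is odd and hence at least $1$. Also $|V(T)|$ is odd, so $\sum_{v\in V(T)}(k-d_T(v))$ is a sum of an odd number of odd positive terms. This gives
\[
\sum_{v\in V(T)}\bigl(k-d_T(v)\bigr)\ge |V(T)|.
\]
A cruder bound is not enough, so I will use the stronger per-vertex bound $k-d_T(v)\ge 1$ in the following form. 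Each vertex $v\in V(T)$ has $d_G(v)-d_T(v)$ boundary edges, so its deficiency is $k-d_T(v)-(\text{its boundary edges})$. Summing over $T$ gives
\[
\sum_{T\in\cX}\sum_{v\in V(T)}\bigl(k-d_G(v)\bigr)=\sum_{T\in\cX}\sum_{v\in V(T)}\bigl(k-d_T(v)\bigr)-e_G\Bigl(\textstyle\bigcup_{T\in\cX}V(T),Y\Bigr).
\]
The boundary term is at most $k|Y|$ because each vertex of $Y$ has degree at most $k$. The first term is at least $\sum_{T}|V(T)|\ge|\cX|$, which only yields $|\cX|-k|Y|$, too weak. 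Instead I need each component to contribute at least $k$ to the first term. For a single-vertex component it contributes exactly $k$. For a larger component $T$ with all degrees even, I want $\sum_v(k-d_T(v))\ge k$, that is, $2e(T)\le k(|V(T)|-1)$.

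The main obstacle is exactly this claim. It follows because $T$ has odd order and maximum internal degree at most $k-1$ (an even number at most $k$). If $T$ were $(k-1)$-regular with $|V(T)|$ odd, then $k-1$ even is allowed, and in that case $2e(T)=(k-1)|V(T)|\le k(|V(T)|-1)$ exactly when $|V(T)|\ge k$. A $(k-1)$-regular graph has at least $k$ vertices, so this holds. In general $d_T(v)\le k-1$ and $d_T(v)\le |V(T)|-1$. Hence if $|V(T)|\ge k$ the sum is at least $|V(T)|\ge k$, and if $|V(T)|<k$ each term is at least $k-|V(T)|+1$, so the sum is at least $|V(T)|(k-|V(T)|+1)\ge k$ (a concave function of $|V(T)|$ on $[1,k]$, equal to $k$ at the ends).

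Combining these bounds gives
\[
\Phi_k(G)\ge k|\cX|-k|Y|=kt,
\]
which is~\eqref{eq:keydef}.
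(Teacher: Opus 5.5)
Your proof is correct and follows essentially the same route as the paper: restrict $\Phi_k(G)$ to the vertices of a maximizing family $\cX$, show that each component $T\in\cX$ has internal deficiency $k|V(T)|-2e(T)\ge k$ via $d_T(v)\le\min\{k-1,|V(T)|-1\}$, and bound the boundary edges into $N_B(\cX)$ by $k|N_B(\cX)|$. The only difference is that you justify $u(k-u+1)\ge k$ by concavity where the paper uses the identity $k+(u-1)(k-u)$; your asides about parity sums and $(k-1)$-regular graphs are unnecessary but harmless.
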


\begin{proof}
First take $T\in\cE$ and put $u=|V(T)|$. Since its internal degrees
are even and $k$ is odd,
$d_T(v)\le\min\{k-1,u-1\}$ for every $v\in V(T)$. Hence
\begin{equation}\label{eq:internaldef}
k|V(T)|-2e(T)
\ge u\bigl(k-\min\{k-1,u-1\}\bigr)\ge k.
\end{equation}
Indeed, if $u\ge k$, the middle expression is at least $u\ge k$;
if $1\le u\le k$, it is at least
$u(k-u+1)=k+(u-1)(k-u)\ge k$.

For any $\cX\subseteq\cE$, let
$U=\bigcup_{T\in\cX}V(T)$ and $Z=N_B(\cX)\subseteq A$.
The components in $\cX$ have no edges to one another, and every
edge leaving $U$ ends in $Z$: there are no edges from $D$ to $R$,
and different components of $G[D]$ have no edges between them.
Therefore
\begin{align}
\Phi_k(G)
&\ge\sum_{v\in U}\bigl(k-d_G(v)\bigr)\notag\\
&=\sum_{T\in\cX}\bigl(k|V(T)|-2e(T)\bigr)-e_G(U,Z)\notag\\
&\ge k|\cX|-k|Z|
 =k\bigl(|\cX|-|N_B(\cX)|\bigr).\label{eq:boundarycount}
\end{align}
The first inequality uses nonnegativity of the degree deficiencies;
the last uses~\eqref{eq:internaldef} and
$e_G(U,Z)\le\sum_{z\in Z}d_G(z)\le k|Z|$.
Taking the maximum over $\cX\subseteq\cE$ proves~\eqref{eq:keydef}.
\end{proof}

\section{A structural bisection bound}\label{sec:structural}

The next proposition is independent of any maximum-degree bound.
\begin{proposition}\label{prop:structural}
Let $G$ have even order $n$ and $m$ edges, and let $t(G)$ be defined
by~\eqref{eq:tdef}. Then
\begin{equation}\label{eq:structural}
\bis(G)\ge\frac m2+\frac{n-t(G)}4.
\end{equation}
\end{proposition}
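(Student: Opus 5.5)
We combine Lemma~\ref{lem:preferred} with the lifting of matchings from Section~\ref{sec:preliminaries} and then apply Lemma~\ref{lem:pairing}. By Lemma~\ref{lem:preferred}, $B$ has a matching $J$ saturating $A$ and leaving exactly $t=t(G)$ components of $\cE$ uncovered. Let $o$ be the number of components of $\cO$ left uncovered by $J$. Lift $J$ to a maximum matching $M$ of $G$. In each uncovered component $T\in\cO$, expose a vertex $w_T$ with $d_T(w_T)$ odd. In each uncovered component of $\cE$, expose an arbitrary vertex. Then $|W|=|\cD|-|A|=t+o$, and $s=|M|=(n-t-o)/2$. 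Since $n$ is even, $t+o$ is even.

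Next we build the specified pairs. The key observation is this. Let $T\in\cO$ be uncovered and let $w=w_T$. Every neighbour of $w$ lies in $V(T)$ or in $A$, since there are no edges to $R$ or to other components. The vertices of $T-w$ are perfectly matched by edges of $M$ inside $T$. Hence $\sum_{xy\in M,\,xy\subseteq T} e_G(\{w\},\{x,y\})=d_T(w)$, which is odd, so some matching edge $xy$ inside $T$ receives an odd number of edges from $w$. Now pair $w$ with any other exposed vertex $w'$ from a different component. Then $w'$ has no neighbours in $T$, so $e_G(\{w,w'\},\{x,y\})=e_G(\{w\},\{x,y\})$ is odd, and \eqref{eq:oddadjacency} holds. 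We may therefore form $q=\min\{o,\lfloor (t+o)/2\rfloor\}$ disjoint specified pairs, each containing at least one $\cO$-exposed vertex. For this, pair $\cO$-vertices with $\cE$-vertices first, then $\cO$-vertices with each other. Distinct components are automatic, since each uncovered component contributes one exposed vertex.

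We then apply Lemma~\ref{lem:pairing}, which gives $\bis(G)\ge m/2+(s+q)/2$, and check the inequality $s+q\ge (n-t)/2$, that is, $q\ge o/2$. If $o\le t$, then $q=o\ge o/2$. If $o>t$, then $q=\lfloor(t+o)/2\rfloor=(t+o)/2\ge o/2$, using that $t+o$ is even. In both cases $m/2+(s+q)/2\ge m/2+(n-t-o)/4+o/4=m/2+(n-t)/4$, which is \eqref{eq:structural}.

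The main obstacle is the parity step: verifying that \eqref{eq:oddadjacency} holds for a pair whose partner is arbitrary. This requires the local structure of the Gallai--Edmonds decomposition. Specifically, the perfect matching of $T-w_T$ must consist of edges of $M$, and the partner must have no neighbours inside $T$. Both facts hold because the lifted matching uses near-perfect matchings within each uncovered component, and because $G[D]$ has no edges between distinct components. The degenerate cases need no separate treatment: if $W=\varnothing$ the bound is just $s=n/2$, and if $o=0$ then $q=0$ and the bound is $s=(n-t)/2$.
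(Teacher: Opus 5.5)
Your proof is correct and follows essentially the same route as the paper. You choose $J$ via Lemma~\ref{lem:preferred}, lift it by exposing odd-internal-degree vertices in the uncovered $\cO$-components, use the same parity claim, and apply Lemma~\ref{lem:pairing} with $q\ge o/2$; the only difference is that you pair $\cO$-vertices with $\cE$-vertices first, whereas the paper pairs $\cO$-vertices among themselves and uses a single $\cE$-vertex only when the count is odd (which forces $t$ odd), giving $q=\lceil o/2\rceil$.
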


\begin{proof}
Choose a matching $J$ of $B$ saturating $A$ and leaving exactly
$t=t(G)$ components of $\cE$ uncovered, as guaranteed by
Lemma~\ref{lem:preferred}. Let $g$ be the number of components of
$\cO$ not covered by $J$.

Lift $J$ to a maximum matching $M$ of $G$ as described after
Theorem~\ref{thm:GE}. In every component $T\in\cO$ not covered
by $J$, choose a vertex $w_T$ of odd internal degree and use a
perfect matching of $T-w_T$. In every uncovered component of
$\cE$, expose an arbitrary vertex. The set $W$ of vertices exposed
by $M$ consequently contains one vertex from each uncovered
component and no other vertices. Put $s=|M|$, and let $W_{\cO}$
be the exposed vertices chosen in components of $\cO$. Then
\begin{equation}\label{eq:counts}
|W_{\cO}|=g,\qquad |W\setminus W_{\cO}|=t,\qquad n=2s+g+t.
\end{equation}

We claim that any pair $\{w,w'\}$ with $w\in W_{\cO}$ and
$w'\in W\setminus\{w\}$ satisfies~\eqref{eq:oddadjacency}.
Let $T$ be the component containing $w$. The matching edges inside
$T$ cover $T-w$. As $d_T(w)$ is odd, at least one of those matching
edges inside $T$, say $xy$, has exactly one end-vertex adjacent to $w$; otherwise counting the neighbors of $w$ in these pairs would give an even degree. The vertex $w'$ is in a different component of $G[D]$, and hence is adjacent to neither $x$ nor $y$. Thus $e_G(\{w,w'\},\{x,y\})=1,$ which proves the claim.

If $g$ is even, pair all vertices of $W_{\cO}$ among themselves.
If $g$ is odd, then~\eqref{eq:counts} and the evenness of $n$ imply
that $t$ is odd, so $t\ge1$. Pair one vertex of $W_{\cO}$ with
one vertex of $W\setminus W_{\cO}$, and pair the remaining vertices
of $W_{\cO}$ among themselves. In either case, this gives
$q=\ceil{g/2}$ disjoint pairs satisfying~\eqref{eq:oddadjacency}.
Lemma~\ref{lem:pairing} now yields
\[
\bis(G)\ge\frac m2+\frac{s+q}{2}
\ge\frac m2+\frac{s}{2}+\frac g4
=\frac m2+\frac{n-t}{4},
\]
as required.
\end{proof}

\section{Proof of the main theorem}\label{sec:mainproof}

\begin{proof}[Proof of Theorem~\ref{thm:main}]
Suppose first that $k$ is odd. We may
assume that $n$ is even: otherwise, add one isolated vertex, apply
the even-order assertion, and delete that vertex from the resulting
bisection. This changes neither $m$ nor the maximum-degree bound
and loses no crossing edges.

By Lemma~\ref{lem:deficiency} and Proposition~\ref{prop:structural},
\[
\bis(G)\ge\frac m2+\frac{n-t(G)}4
\ge\frac m2+\frac14\left(n-\frac{\Phi_k(G)}k\right)
=\frac m2+\frac{m}{2k}
=\frac{k+1}{2k}\,m.
\]
This proves the odd case. If $k$ is even, apply the odd case with
$k+1$, using $\Delta(G)\le k+1$, to obtain
$\bis(G)\ge\frac{k+2}{2(k+1)}m$.

Finally, $K_{k+1}$ has $k(k+1)/2$ edges, and every bisection of it
has $\floor{(k+1)^2/4}$ crossing edges. Consequently,
\[
\frac{\bis(K_{k+1})}{|E(K_{k+1})|}
=\frac{\floor{(k+1)^2/4}}{k(k+1)/2}
=\begin{cases}
\displaystyle\frac{k+1}{2k},&\text{if $k$ is odd},\\[6pt]
\displaystyle\frac{k+2}{2(k+1)},&\text{if $k$ is even}.
\end{cases}
\]
Both coefficients are therefore best possible.
\end{proof}

\section{Concluding remarks}\label{sec:conclusion}

Theorem~\ref{thm:main} proves the unweighted case of the conjecture
of Gerke, Gutin, Yeo and Zhou~\cite{GGYZ}. The exact coefficients
hold without regularity, connectedness or a positive minimum degree.
The central estimate is~\eqref{eq:boundarycount}: each even-degree
factor-critical component supplies at least $k$ units of internal
degree deficiency, while each vertex of its external neighborhood
can account for at most $k$ boundary edges. Hall deficiency therefore
measures a loss that the total degree deficiency can absorb.

The argument is specific to unweighted graphs. In
Lemma~\ref{lem:pairing}, an odd integer number of edges guarantees a
gain of at least $1/2$ over half the newly counted edges. Arbitrary
nonnegative edge weights do not provide the same uniform gain,
and the degree-deficiency estimate controls edge counts rather than
total edge weight. Thus this proof does not establish the full
edge-weighted conjecture.

\end{document}